\documentclass{Tran-l}

\usepackage{amssymb}
\usepackage{amsmath}


\usepackage{tikz,tikz-cd}

\usepackage[colorlinks=true,linktocpage=true,linkcolor=blue,citecolor=red]{hyperref}


\newtheorem{theorem}{Theorem}[section]

\newtheorem{definition}[theorem]{Definition}

\newtheorem{lemma}[theorem]{Lemma}

\newtheorem{proposition}[theorem]{Proposition}

\theoremstyle{definition}
\newtheorem{example}[theorem]{Example}
\newtheorem{construction}[theorem]{Construction}
\newtheorem{remark}[theorem]{Remark}

\begin{document}

\title{Compact Hausdorff locales in presheaf toposes}

\author{Simon Henry}\email{Shenry2@uottawa.ca}
\author{Christopher Townsend}\email{info@christophertownsend.org}

\subjclass[2020]{ 06D22, 18B25, 18F20, 18F70}
\maketitle
\begin{abstract}
We prove that for any small category $\mathcal{C}$, the category  $\mathbf{KHausLoc}_{\hat{\mathcal{C}}}$ of compact Hausdorff locales in the presheaf topos $\hat{\mathcal{C}}$, is equivalent to the category of functors $\mathcal{C} \to \mathbf{KHausLoc}$.  
\end{abstract}

\section{Introduction}
In this paper we prove for any small category $\mathcal{C}$ that there is an equivalence of categories:
\begin{eqnarray*}
\mathbf{KRegFrm}_{\hat{\mathcal{C}}} \simeq [\mathcal{C}^{op},\mathbf{KRegFrm}  ]
\end{eqnarray*}
where $\mathbf{KRegFrm}$ is the category of compact regular frames, $\hat{\mathcal{C}}$ is the presheaf topos $[ \mathcal{C}^{op}, \mathbf{Set}]$ and $\mathbf{KRegFrm}_{\hat{\mathcal{C}}}$ is the category of compact regular frames in the topos $\hat{\mathcal{C}}$. Since $\mathbf{KRegFrm}$ is dual to the category of compact Hausdorff locales ($\mathbf{KHausLoc}$) in every topos, the claim of the abstract is shown with this categorical equivalence.

This result can be thought of as a new example of ``open/proper duality'' (e.g. \cite{parallel}). Indeed, discrete locales are locales $X$ such that both the unique map $X \to 1$ and the diagonal map $X \to X \times X$ are open maps, and discrete locales in the topos $\hat{\mathcal{C}}$ correspond to presheaves on $\mathcal{C}$; that is, to functors $\mathcal{C}^{op} \to \mathbf{Set}$. In this note, we are proving that ``dually'', compact Hausdorff locales in $\hat{\mathcal{C}}$, that is locales $X$ in $\hat{\mathcal{C}}$ such that both the unique map $X \to 1$ and the diagonal map $X \to X \times X$ are proper, correspond to functors $\mathcal{C} \to \mathbf{KHausLoc}$.

In summary, the proof proceeds as follows. In section \ref{sec:NDL} we show that compact regular frames can be characterised as completions of normal distributive lattices, with the completion given by an idempotent functor $C$ acting on the category of normal distributive lattices ($\mathbf{NDL}$). This characterisation applies internally in the presheaf topos $\hat{\mathcal{C}}$, hence we have established that any compact regular frame in $\widehat{\mathcal{C}}$ is the completion of an object of $\mathbf{NDL}_{\hat{\mathcal{C}}}$. Because the notion of normal distributive lattice is geometric (in the sense of geometric logic, e.g. D1 of \cite{Elephant}), we have an isomorphism of categories $\mathbf{NDL}_{\hat{\mathcal{C}}} \cong [\mathcal{C}^{op},\mathbf{NDL}]$. These observations give us a way of understanding the category $\mathbf{KRegFrm}_{\hat{\mathcal{C}}}$ as consisting of objects obtained by completing objects of $[\mathcal{C}^{op},\mathbf{NDL}]$. To conclude the proof we need an explicit description of the relative version of the idempotent endofunctor $C$, acting on $\mathbf{NDL}_{\hat{\mathcal{C}}}$ and essentially this is what is provided by the rest of the paper.

In section \ref{sec:lax} we introduce a categorical construction on presheaves taking values in an order enriched category. This construction is adjoint to the forgetful functor from the category of presheaves and natural transformation to the category of presheaves and lax natural transformation. In section \ref{sec:exemple}, we give examples of this construction and explain how it is used to describe the relative version of the construction $C$ mentioned above. Finally in section \ref{sec:main_th} we put everything together and prove the main theorem.

\section{Normal distributive lattices}
\label{sec:NDL}

By a \emph{distributive lattice}, we mean a poset with finite\footnote{Including nullary} joins and finite meets which satisfies the distributivity law  $a \wedge ( b \vee c) = (a \wedge b) \vee (a \wedge c)$, or equivalently the dual distributivity law  $a \vee ( b \wedge c) = (a \vee b) \wedge (a \vee c)$. Morphisms of distributive lattices are order preserving maps that preserves finite joins and finite meets (i.e. lattice homomorphisms).

In a poset $P$, if $I \subset P$, we write $\downarrow I = \{ a \in P | \exists i \in I, a \leqslant i \}$, and $\downarrow a = \downarrow \{a\}$ if $ a \in P$. If $f : X \to Y$ is a function we denote by $\exists_f$ the direct image function from the power set of $X$ to the power set of $Y$.

A subset $I \subseteq D$ of a distributive lattice $D$ is an \emph{ideal} if and only if $a \leq b \in I \Rightarrow a \in I$ and $I$ is closed under finite joins. The set of all ideals, written $idl(D)$, is itself a distributive lattice.

\begin{definition}
A distributive lattice $D$ is \emph{normal} provided for any $a,b \in D$ if $ a \vee b = 1$ then there exists $a',b' \in D$ such that
\begin{eqnarray*}
a' \wedge b' = 0 \text{ and } a' \vee b = 1 = a \vee b'  \text{.}
\end{eqnarray*}
We denote by $\mathbf{NDL}$ the full subcategory of distributive lattices defined by this condition.
\end{definition}

 As usual, one defines the relation $a \triangleleft b$ by $\exists c$ such that $a \wedge c = 0$ and $b \vee c = 1$. An equivalent way to define normal is then $a \vee b =1 \Rightarrow \exists a' \triangleleft a$ such that $a' \vee b = 1$. It also follows that the relation $\triangleleft$ is interpolative in a normal distributive lattice: say $ a \triangleleft b$, witnessed by $c$, so that $b \vee c =1$; then there exists $b' \triangleleft b$ and $b' \vee c =1$ so that $c$ also witnesses $ a \triangleleft b'$.

\begin{example}
A compact regular frame $\mathcal{O}X$ is normal. Regularity is the assertion that $b = \bigvee \{ b' | b' \triangleleft b \}$ for every open $b$. But the join is directed so $a \vee b = 1$ implies there exists $b' \triangleleft b$ with $a \vee b' =1$ by compactness,  which as observed above implies that $\mathcal{O}X$ is normal.  Frame homomorphisms are lattice homomorphism so there is a forgetful functor $u:\mathbf{KRegFrm} \to \mathbf{NDL}$.
\end{example}

We now have a couple of lattice theoretic propositions which show that compact regular frames can be seen as completions of normal distributive lattices.  

\begin{proposition}\label{prop:C_completion}
Let $N$ be a normal distributive lattice. Define
\begin{eqnarray*}
C(N) = \{ I \subseteq N | I \text{an ideal and } \forall a \in I \text{ }\exists b \in I \text{ such that } a \triangleleft b \} \text{.}
\end{eqnarray*}
Then:

1. $C(N)$ is a distributive lattice.

2. $\Downarrow: N \to C(N)$, defined by $ \Downarrow a = \{ b | b \triangleleft a \}$, is a well defined lattice homomorphism. 

3. $C(N)$ is a normal distributive lattice. 

4. By setting $C(f)(I) = \downarrow \exists_f (I)$ for any lattice homomorphism $f:N \to M$ we have defined an order enriched functor $C: \mathbf{NDL} \to \mathbf{NDL}$.

\end{proposition}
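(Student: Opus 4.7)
The plan is to first record closure properties of the rather-below relation $\triangleleft$, then verify (1)--(4) in sequence, with the main substance concentrated in (3). In any distributive lattice, $\triangleleft$ is monotone on both sides ($a \leq a' \triangleleft b' \leq b$ implies $a \triangleleft b$ via the same witness) and closed under finite joins and meets: if $a_i \triangleleft b_i$ via $c_i$, then $a_1 \vee a_2 \triangleleft b_1 \vee b_2$ via $c_1 \wedge c_2$, and $a_1 \wedge a_2 \triangleleft b_1 \wedge b_2$ via $c_1 \vee c_2$, while $0 \triangleleft 0$ and $1 \triangleleft 1$ are immediate. The interpolation property of $\triangleleft$ in a normal $N$, already noted in the paper, comes from applying normality to $b \vee d = 1$ when $d$ witnesses $a \triangleleft b$.

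For (1), $C(N)$ contains $\{0\}$ and $N$; it is closed under intersection, since $a \triangleleft b \in I$ and $a \triangleleft c \in J$ give $a \triangleleft b \wedge c \in I \cap J$; and it is closed under the ideal join $I \vee J = \downarrow\{i \vee j : i \in I, j \in J\}$, since any $d \leq i \vee j$ admits $i \triangleleft i' \in I$, $j \triangleleft j' \in J$, hence $d \triangleleft i' \vee j' \in I \vee J$. Distributivity is inherited from $idl(N)$. For (2), interpolation places $\Downarrow a$ in $C(N)$, and preservation of $0, 1, \wedge$ is immediate (with $\Downarrow a \cap \Downarrow b = \Downarrow(a \wedge b)$ following from monotonicity and meet-closure of $\triangleleft$). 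The subtle point is $\Downarrow(a \vee b) = \Downarrow a \vee \Downarrow b$: given $c \triangleleft a \vee b$ via $d$, apply normality first to $a$ and $b \vee d$ (using $a \vee b \vee d = 1$) to obtain $a' \triangleleft a$ with $a' \vee b \vee d = 1$, then to $b$ and $a' \vee d$ to obtain $b' \triangleleft b$ with $a' \vee b' \vee d = 1$; the identity $c = (c \wedge a') \vee (c \wedge b') \vee (c \wedge d)$ combined with $c \wedge d = 0$ forces $c \leq a' \vee b'$, placing $c$ in $\Downarrow a \vee \Downarrow b$.

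The main obstacle is (3). Given $I \vee J = \top = N$ in $C(N)$, pick $i \in I$, $j \in J$ with $i \vee j = 1$ and apply normality in $N$ to produce $a, b$ with $a \wedge b = 0$, $a \vee j = 1$, $i \vee b = 1$. Set $I' := \Downarrow a$ and $J' := \Downarrow b$ in $C(N)$. Then $I' \cap J' = \Downarrow(a \wedge b) = \Downarrow 0 = \{0\} = \bot$. A further application of normality to $a \vee j = 1$ produces $a' \triangleleft a$ with $a' \vee j = 1$, putting $1 \in I' \vee J$; symmetrically $1 \in I \vee J'$. For (4), it is routine that $C(f)(I) = \downarrow\exists_f(I)$ is an ideal and lies in $C(M)$ (lattice homomorphisms preserve $\triangleleft$, so a predecessor $a \triangleleft a'$ in $I$ yields $f(a') \in C(f)(I)$ dominating any $m \leq f(a)$), preserves $0, 1, \vee, \wedge$ directly from the formula, composes as $C(gf) = C(g) C(f)$ (since $\downarrow g(\downarrow f(I)) = \downarrow gf(I)$), and is order-preserving ($f \leq g$ pointwise implies $C(f)(I) \subseteq C(g)(I)$).
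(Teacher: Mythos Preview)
Your proof is correct and follows essentially the same approach as the paper's: both establish that $C(N)$ is a sublattice of $idl(N)$, use interpolation for well-definedness of $\Downarrow$, and prove normality of $C(N)$ by lifting a witness from $N$ via $\Downarrow$. The only cosmetic difference is in part (3): the paper verifies the equivalent form of normality (producing a single $\Downarrow a' \triangleleft I$ with $\Downarrow a' \vee J = 1$), whereas you verify the symmetric definitional form directly by taking $I' = \Downarrow a$, $J' = \Downarrow b$ from a single application of normality in $N$; both arguments are equally short.
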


\begin{proof}

1. In fact $C(N)$ is a sublattice of $idl(N)$. The only slightly non-trivial observation is closure under binary joins. Recall that if $I$ and $J$ are ideals then their join in $idl(N)$ is given by $\downarrow \{ a_1 \vee a_2 | a_1 \in I, a_2 \in J \}$. To check that this join is in C(N) if $I$ and $J$ are, recall that if $a'_i$ witnesses that $a_i \triangleleft b_i$ for $i = 1,2$ then $a'_1 \wedge a'_2$ witnesses that $a_1 \vee a_2 \triangleleft b_1 \vee b_2$. It follows that $C(N)$ is a distributive lattice.

2. This is well defined as $\triangleleft$ is interpolative. That it is a lattice homomorphism follows from our description of meets and joins in $C(N)$; for binary joins, observe that if $b \triangleleft a_1  \vee  a_2 $, witnessed by $c$ with $a_1 \vee a_2 \vee c = 1$, then by double application of the equivalent way of defining normal there exists $a'_1,a'_2$ such that $a'_1 \vee a'_2 \vee c = 1$, from which $b\leq a'_1 \vee a'_1$. 

3. Say $I \vee J = 1$. Then there exists $a \in I, b \in J$ such that $a \vee b =1$. By normality of $N$ there exists $a' \triangleleft a$ such that $a' \vee b =1$. Because, by normality again, there exists $a'' \triangleleft a'$ with $a'' \vee b =1$, we have that $\Downarrow a' \vee J = 1$. Say $c$ witnesses $a' \triangleleft a$, then $\Downarrow a' \wedge \Downarrow c \subseteq \downarrow a' \wedge c = 0$. But $c \vee a = 1$ implies there exist $c' \triangleleft c$ with $c' \vee a = 1$. It follows that $\Downarrow c \vee I =1$ as $a \in I$. Therefore $\Downarrow c$ witnesses that $\Downarrow a' \triangleleft I$ which completes the proof that $C(N)$ is normal.  

4. That $C(f)$ is well defined is clear as $f$ will preserve $\triangleleft$. It is a routine exercise to show that because $f$ preserves finitary meets and joins, $C(f)$ must be a lattice homomorphism.
\end{proof}

Our final Proposition for this section shows that the category of compact regular frames is a splitting of the category of normal distributive lattices. 
\begin{proposition}\label{prop:C_compact}

For every normal distributive lattice $N$, $C(N)$ is a compact regular frame and $C(f):C(N) \to C(M)$ is a frame homomorphism for every lattice homomorphism $f: N \to M$. Therefore $C$ determines a functor $c:\mathbf{NDL} \to \mathbf{KRegFrm}$ such that $C = uc$ and $cu \cong Id_{\mathbf{KRegFrm}}$, where $u: \mathbf{KRegFrm} \to\mathbf{NDL} $ is the forgetful functor. That is, $\mathbf{KRegFrm}$ is a splitting of $C:\mathbf{NDL}\to \mathbf{NDL}$.

\end{proposition}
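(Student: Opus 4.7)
The plan is to build $C(N)$ up to a compact regular frame in three steps, then upgrade $C$ on morphisms, then construct the isomorphism $cu\cong \mathrm{Id}$. First I would check that $C(N)$ is not merely a sublattice of $\mathit{idl}(N)$ but is also closed under arbitrary joins in $\mathit{idl}(N)$: given a family $\{I_\alpha\}\subseteq C(N)$, the ideal $J$ generated by $\bigcup_\alpha I_\alpha$ consists of elements below finite joins $a_1\vee\cdots\vee a_n$ with $a_i\in I_{\alpha_i}$, and picking interpolants $a_i\triangleleft b_i$ inside $I_{\alpha_i}$ gives $a_1\vee\cdots\vee a_n\triangleleft b_1\vee\cdots\vee b_n\in J$ (using that $\triangleleft$ is closed under finite joins on both sides in any distributive lattice). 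Since $\mathit{idl}(N)$ is a frame and $C(N)$ is closed in it under finite meets and arbitrary joins, $C(N)$ inherits frame distributivity.

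Compactness is immediate: the top of $C(N)$ is $N$ itself (since $a\triangleleft 1$ is witnessed by $0$), and any cover $N=\bigvee_\alpha I_\alpha$ must contain $1$ in a finitely generated subideal, forcing $1=a_1\vee\cdots\vee a_n$ with $a_i\in I_{\alpha_i}$ and hence $N=I_{\alpha_1}\vee\cdots\vee I_{\alpha_n}$. For regularity, the key observation is that if $b\triangleleft c$ in $N$, witnessed by $e$, then $\Downarrow b \triangleleft \Downarrow c$ in $C(N)$, witnessed by $\Downarrow e$; the intersection condition comes from $d\in\Downarrow b\cap \Downarrow e$ forcing $d\le b\wedge e=0$, and the covering condition $\Downarrow c \vee \Downarrow e = N$ follows from iterating normality on $c\vee e=1$ as in the proof of Proposition \ref{prop:C_completion}(3). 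Then for any $I\in C(N)$ I would show $I=\bigvee_{b\in I}\Downarrow b$ (the definition of $C(N)$ forces every $a\in I$ to lie in some $\Downarrow b$), and for each $b\in I$ choose $b'\in I$ with $b\triangleleft b'$ to conclude $\Downarrow b \triangleleft \Downarrow b' \subseteq I$ (whence $\Downarrow b \triangleleft I$ by monotonicity of the witness). This exhibits every $I$ as a join of elements rather-below it.

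To upgrade $C$ to a functor $c:\mathbf{NDL}\to \mathbf{KRegFrm}$, since Proposition \ref{prop:C_completion}(4) already gives that $C(f)=\downarrow\exists_f$ is a lattice homomorphism, I only need to verify that $C(f)$ preserves arbitrary joins. Unpacking both $C(f)(\bigvee I_\alpha)$ and $\bigvee C(f)(I_\alpha)$ using the description of joins in $C(N)$ and $C(M)$, and using that $f$ preserves finite joins, the two sets are seen to be the same downset; this is the routine calculation I would skip.

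Finally, for $cu\cong \mathrm{Id}_{\mathbf{KRegFrm}}$, given a compact regular frame $\mathcal{O}X$, I would show that $\Downarrow : \mathcal{O}X \to C(\mathcal{O}X)$ is a frame isomorphism with inverse given by $I\mapsto \bigvee I$. That $\bigvee \Downarrow a = a$ is exactly regularity of $\mathcal{O}X$; conversely, $\Downarrow \bigvee I = I$ reduces to showing that if $b\triangleleft \bigvee I$ then $b\in I$, and here compactness is essential: a witness $c$ gives $(\bigvee I)\vee c=1$, and compactness together with $I$ being directed produces $a\in I$ with $a\vee c=1$, so $b\triangleleft a \leq a$ and thus $b\in I$. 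Naturality in $\mathcal{O}X$ is automatic from the definitions. The main obstacle I expect is the regularity argument in $C(N)$, because it is the one place where interpolation, the specific form of witnesses, and closure of $\triangleleft$ under finite joins all have to be combined simultaneously; everything else is essentially bookkeeping given Proposition \ref{prop:C_completion}.
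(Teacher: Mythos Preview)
Your proposal is correct and follows essentially the same route as the paper. The only differences are economies of presentation: the paper observes that directed joins in $C(N)$ are unions (so closure under arbitrary joins and preservation by $C(f)$ reduce to the directed case, the finite case having been handled in Proposition~\ref{prop:C_completion}), and for $\Downarrow\bigvee I = I$ it quotes the standard fact $\triangleleft = \ll$ in compact regular frames rather than reproving the needed instance as you do.
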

\begin{proof}
Certainly $C(N)$ is a frame as directed joins are given by union, from which it is trivial that $C(N)$ is compact. By definition of $C(N)$, any $I \in C(N)$ satisfies $I = \bigcup^{\uparrow} \{ \Downarrow a | a \in I \}$. But just as in the previous proposition we can show $\Downarrow a \triangleleft I$ for every $ a \in I$ so $C(N)$ is regular. (Explicitly, if $a \in I$ then $\exists b \in I$ with $a \triangleleft b$, witnessed by $c$, say. So certainly $\Downarrow a \wedge \Downarrow c = 0$. But $c \vee b =1$, so there exists $c' \triangleleft c$ such that $c' \vee b = 1$ and so since $c' \vee b \in \Downarrow c \vee I$ we have $\Downarrow a \vee I = 1$. Therefore $\Downarrow c$ witnesses that $\Downarrow a \triangleleft I$.)

As directed joins are given by union and union clearly commutes with $\downarrow \exists_f$, it can be verified that $C(f)$ preserves directed joins. Since we have already established that $C(f)$ is a lattice homomorphism we can conclude that $C$ determines a functor $c:\mathbf{NDL} \to \mathbf{KRegFrm}$, and certainly $C=uc$ by construction.

Next, say $N= \mathcal{O}X$, a compact regular frame. Send $I \in C\mathcal{O}X$ to $\bigvee^{\uparrow} I \in \mathcal{O}X$. To show $\bigvee^{\uparrow} = \Downarrow^{-1}$, one way round is clear by regularity of $\mathcal{O}X$. For the other way round, firstly if $a \in I $ then there exists $a' \in I $ with $ a \triangleleft a'$; but then $ a \triangleleft \bigvee^{\uparrow} I$. Secondly if $ a \triangleleft \bigvee^{\uparrow} I$, then $ a \ll \bigvee^{\uparrow} I$ as $\mathcal{O}X$ is compact regular; hence $ a \in I$. (Recall that in a compact regular frame $\ll = \triangleleft$; see e.g. VII Lemma 3.5 of \cite{Stone}.) It follows that $cu \cong Id_{\mathbf{KRegFrm}}$ with the naturality aspect being clear as the isomorphism is directed join and frame homomorphisms are directed join preserving. 
\end{proof}

All the results of this section can be applied internally in any topos $\mathcal{T}$, showing that the category $\mathbf{KRegFrm}_\mathcal{T}$ of compact regular frames in the topos $\mathcal{T}$ is a splitting of the category $\mathbf{NDL}_\mathcal{T}$. We will use this to understand the category $\mathbf{KRegFrm}_{\hat{\mathcal{C}}}$, but in order to do so we will need an explicit description of the functor $C_{\hat{\mathcal{C}}} : \mathbf{NDL}_{\hat{\mathcal{C}}} \to \mathbf{NDL}_{\hat{\mathcal{C}}}$; the next two sections lead up to this explicit description.

\section{Turning lax natural transformation into ordinary natural transformations}
\label{sec:lax}

In this section we outline a basic categorical construction that relates lax natural transformations between presheaves taking values in an order enriched category, to ordinary natural transformations. In the next section we will show that this construction is closely related to the functor $C_{\hat{\mathcal{C}}}$.

We will work with order enriched categories; that is,  homsets are partially ordered sets (posets) and composition preserves the order. The order relation between morphisms will be denoted $ \sqsubseteq $. Universal properties are required to establish order isomorphisms (not just bijections) between the posets of morphisms.  

If $F_1,F_2:\mathcal{C}^{op} \to \mathfrak{K}$ are two order enriched functors between order enriched categories, then a \emph{lax} natural transformation $\phi: F_1 \xrightarrow{\sqsubseteq} F_2$ is a collection of morphisms $\phi_a:F_1(a) \to F_2(a)$ indexed by objects $a$ of $\mathcal{C}$ such that for any morphism $h:a \to a'$ of $\mathcal{C}$, $\phi_{a'}F_1(h) \sqsupseteq F_2(h) \phi_a$; i.e.
\[\begin{tikzcd}[row sep = large]
  F_1(a) \ar[r,"\phi_a"] \ar[d,"F_1(h)"swap] \ar[dr,phantom,"\sqsupseteq"description]& F_2(a) \ar[d,"F_2(h)"] \\
  F_1(a') \ar[r,"\phi_{a'}"] & F_2(a') 
\end{tikzcd} \]

We use $[\mathcal{C}^{op},\mathfrak{K}]^{\sqsubseteq}$ as notation for the order enriched category of presheaves with lax natural transformations between them. The ordering on the lax natural transformation is pointwise.

Recall that a lax limit of an order enriched functor functor $D:\mathcal{J} \to \mathfrak{K}$ is a universal lax cone, where a \emph{lax} cone is  collection of morphisms $\pi_j:lim^{\sqsubseteq}_{\mathcal{J}} D \to D(j)$ indexed by object $j$ of $\mathcal{J}$ such that for any morphism $\alpha:i \to j$ of $\mathcal{J}$,  $D(\alpha) \pi_i \sqsubseteq \pi_j$. 
\begin{example}
The order enriched category of posets, $\mathbf{Pos}$, has arbitrary lax limits. Given $D:\mathcal{J} \to \mathbf{Pos}$,
\begin{eqnarray*}
lim^{\sqsubseteq}_{\mathcal{J}} D = \{ (x_j) \in \prod_{j \in Ob ( \mathcal{J} )} D(j) |D(\alpha)x_i \leq x_j \text{ } \forall \alpha: i \to j \in \mathcal{J} \} \text{.} 
\end{eqnarray*}

Another example is the category of distributive lattices; it is easy to check that lax limits of distributive lattices are created in $\mathbf{Pos}$
\end{example}

\begin{example}\label{sup_example}
The category of suplattices (i.e. complete lattices with arbitrary join preserving maps as morphisms), $\mathbf{Sup}$, has arbitrary lax limits. They are created in $\mathbf{Pos}$ with join given pointwise (i.e. $\bigvee_{i \in I} (x^i_j) = (\bigvee_{i \in I} x^i_j)$ ).
\end{example}

We will only need the existence of lax limits for $\mathcal{J}$ with an initial object. 
\begin{definition}
An order enriched category $\frak{K}$ is \emph{initial-lax complete} if it has a lax limit $lim^{\sqsubseteq}_{\mathcal{J}} D$ whenever $\mathcal{J}$ has an initial object. 
\end{definition}

The category $\mathbf{NDL}$ is relatively far from being complete in general, but it does satisfies this condition:

\begin{proposition}
The order enriched category $\mathbf{NDL}$ is initial-lax complete with initial lax limits being created in $\mathbf{Pos}$. 
\end{proposition}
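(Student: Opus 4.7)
The plan is to use the explicit description of $\mathbf{Pos}$-lax limits from the preceding example,
\[ L := \{ (x_j) \in \prod_{j \in \mathcal{J}} D(j) \mid D(\alpha) x_i \leq x_j \text{ for every } \alpha: i \to j \}, \]
and verify that it lifts to a lax limit in $\mathbf{NDL}$ whenever $\mathcal{J}$ has an initial object $0$. By the first example, lax limits of distributive lattices are already created in $\mathbf{Pos}$, so $L$ is automatically a distributive lattice with operations computed componentwise; this part does not even use the initial object. The real work is to prove $L$ is normal and that it carries the universal lax-cone property in $\mathbf{NDL}$.

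For normality, suppose $(a_j), (b_j) \in L$ satisfy $(a_j) \vee (b_j) = 1$, i.e.\ $a_j \vee b_j = 1$ in every $D(j)$. The idea is to choose witnesses once in $D(0)$ and transport them along the unique arrows $!_j : 0 \to j$. Apply normality of $D(0)$ to $a_0, b_0$ to get $a'_0, b'_0 \in D(0)$ with $a'_0 \wedge b'_0 = 0$, $a'_0 \vee b_0 = 1$ and $a_0 \vee b'_0 = 1$, and set $a'_j := D(!_j)(a'_0)$, $b'_j := D(!_j)(b'_0)$. Because $\alpha \circ !_i = !_j$ for every $\alpha: i \to j$ (by uniqueness from the initial object), one has the stronger equality $D(\alpha) a'_i = a'_j$, so both tuples lie in $L$. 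Since each $D(!_j)$ is a lattice homomorphism, $a'_j \wedge b'_j = D(!_j)(0) = 0$; and since $(b_j) \in L$ forces $b_j \geq D(!_j)(b_0)$, we get $a'_j \vee b_j \geq D(!_j)(a'_0 \vee b_0) = 1$, with the symmetric argument yielding $a_j \vee b'_j = 1$.

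For the universal property, any lax cone $(f_j : M \to D(j))$ with $M \in \mathbf{NDL}$ is in particular a lax cone in $\mathbf{DL}$; the $\mathbf{DL}$ case supplies a unique mediating lattice homomorphism $M \to L$, which is automatically a morphism of $\mathbf{NDL}$ because $\mathbf{NDL}$ is full in $\mathbf{DL}$. The upgrade from bijection to order isomorphism on homsets is routine because lax cones are ordered pointwise and the order on $L$ is the restricted product order.

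The only genuinely delicate point is the normality argument: a naive pointwise application of normality in each $D(j)$ would produce witnesses that need not be compatible with the transition maps $D(\alpha)$, and hence need not assemble into an element of $L$. The initial object sidesteps this coherence obstruction by letting a single choice made in $D(0)$ propagate uniquely and functorially to every other $D(j)$; this is exactly why the hypothesis on $\mathcal{J}$ is needed and why general completeness in $\mathbf{NDL}$ should not be expected.
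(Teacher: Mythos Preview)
Your proof is correct and follows essentially the same approach as the paper: both arguments reduce to checking normality of the $\mathbf{Pos}$-computed lax limit by choosing witnesses in $D(0)$ via normality there and transporting them along the unique maps $D(!_j)$, using the lax-cone inequalities $D(!_j)(a_0)\leq a_j$ and $D(!_j)(b_0)\leq b_j$ to push the covering conditions through. You spell out a few points the paper leaves implicit (e.g.\ that the transported tuples actually lie in $L$, and the universal property via fullness of $\mathbf{NDL}$ in $\mathbf{DL}$), but there is no substantive difference.
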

\begin{proof}
Consider a diagram $D:\mathcal{J} \to \mathbf{NDL}$, such that $\mathcal{J}$ has an initial object. For any object $j$ of $\mathcal{J}$, write $!^j:0 \to j$ for the unique map to $j$. We have commented already that the category of distributive lattices is lax complete. Since a morphism of normal distributive lattices is the same thing as a morphism of distributive lattices, we must just check that the distributive lattice 
\begin{eqnarray*}
N = \{ (a_j) \in \prod_j D(j) | D(f)a_i \leq a_j \text{  }  \forall f: i \to j  \in  \mathcal{J} \}
\end{eqnarray*}
is normal. Say $(a_j) \vee (b_j) = 1_N$. Then $a_0 \vee b_0 = 1_{D0}$. So as $D0$ is normal there exists $a'_0$ and $b'_0$ such that $a'_0 \wedge b'_0 = 0_{D0} \text{ and } a'_0 \vee b_0 = 1_{D0} = a_0 \vee b'_0$. Let $a_j' = D(!^j)(a_0')$ and $b_j' = D(!^j)(b_0')$. Then $(a'_j),(b'_j) \in N$ and $(a'_j) \wedge (b'_j) = 0_N,  (a'_j) \vee (b_j) = 1_N = (a_j) \vee (b'_j)$, the last because $D(!^j)(b_0) \leq b_j$ and $D(!^j)(a_0) \leq a_j$ for every $j$.   
\end{proof}

\begin{construction}

Given a functor $F:\mathcal{C}^{op} \to \mathfrak{K}$, with $\mathfrak{K}$ an initial-lax complete order enriched category, then we can define a new functor $\tilde{F}:\mathcal{C}^{op} \to \mathfrak{K}$ by $\tilde{F}(a) = lim^{\sqsubseteq}_{(\mathcal{C}/a)^{op}}((\mathcal{C}/a)^{op} \xrightarrow{\Sigma_a^{op}} \mathcal{C}^{op} \xrightarrow{F} \mathfrak{K}$), with morphisms defined via the universal characterisation of the lax limit. (Recall $\Sigma_a: \mathcal{C} / c \to \mathcal{C}$ is the forgetful functor, and $\mathcal{C}/a$ has a terminal object for every object $a$ of $\mathcal{C}$.) We will use point set notation in what follows as a convenient notation; so, 
\begin{eqnarray*}
\tilde{F}(a) = \{ (x_f)_{f:b \to a} \in \prod_{f: b\to a} F(b)| F(g)x_f \sqsubseteq x_{fg} \text{ } \forall c \xrightarrow{g} b \xrightarrow{f} a \}
\end{eqnarray*}
and for $(x_f) \in \tilde{F}(a')$, and $h:a' \to a$ we define

\[ ([\tilde{F}(h)](x_f))_{f'} =  x_{hf'} \]
for all $f' :b' \to a'$.
  
\end{construction}

\begin{construction}
  Let $F_1, F_2: \mathcal{C}^{op} \to \mathfrak{K}$ be two functors, with $\mathfrak{K}$ an initial-lax complete order enriched category.

  If $\phi : F_1 \xrightarrow{\sqsubseteq} F_2$ is a lax natural transformation, define $\tilde{\phi}:\tilde{F_1} \to \tilde{F_2}$ by

  \[ [\tilde{\phi}_a ((x))]_f = \phi_b (x_f) \]
  for all $f:b \to a$. $\tilde{\phi}_a (x)$ is indeed an element of $\tilde{F_2}$ as for any $ c \xrightarrow{g} b \xrightarrow{f} a$, we have
\[\begin{split} F_2(g) [\tilde{\phi}_a ((x))]_f = & F_2(g) \phi_b(x_f) \\ \sqsubseteq & \phi_c(F_1(g) x_f) \sqsubseteq \phi_c(x_{fg}) = [\tilde{\phi}_a((x))]_{fg} \end{split} \]

  And finally, $\tilde{\phi}$ is a natural transformation, as for any $h:a'\to a$ and $f': b' \to a'$ we have
  \[ \begin{split} [[\tilde{F_2}(h)](\tilde{\phi}_a((x)))]_{f'} =& [\tilde{ \phi}_a ((x))]_{hf'}  \\ =& \phi_{b'}(x_{hf'}) = \phi_{b'}(([\tilde{F}_1(h)](x))_{f'}) = [\tilde{\phi}_{a'}([\tilde{F}_1(h)] (x))]_{f'}  \end{split} \]

  This construction clearly defines an order enriched functor
  \[ \tilde{ ( \_ ) } : [\mathcal{C}^{op},\mathfrak{K}]^{\sqsubseteq} \to  \left[\mathcal{C}^{op},\mathfrak{K} \right]. \]
  
\end{construction}

\begin{lemma}\label{tilde}
Let $\mathfrak{K}$ be an initial-lax complete order enriched category and $F_1, F_2: \mathcal{C}^{op} \to \mathfrak{K}$ two functors (i.e. two presheaves). Then the map $\tilde{( \_)} : Nat^{\sqsubseteq}[F_1,F_2] \to Nat[\tilde{F_1}, \tilde{F_2}]$ is an inclusion with a right adjoint (we which will write $\alpha \mapsto \psi^{\alpha}$). Further, 

(i) $\psi^{Id_{\tilde{F}}} = Id_F$ and $\psi^{ \beta} \psi^{ \alpha } \sqsubseteq \psi^{ \beta \alpha}$; and, 

(ii) $\psi^{\alpha}$ is lax-natural in $\alpha$.
\end{lemma}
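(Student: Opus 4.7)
The plan is to build $\psi^\alpha$ from two canonical comparisons between any presheaf $F$ and its transform $\tilde{F}$: a strict natural transformation $\iota_F:F\to\tilde{F}$ given by $\iota_{F,a}(x)_f:=F(f)(x)$ for $x\in F(a)$ and $f:b\to a$, and a lax natural transformation $\epsilon_F:\tilde{F}\xrightarrow{\sqsubseteq}F$ given by $\epsilon_{F,a}(y):=y_{\mathrm{id}_a}$. Strict naturality of $\iota_F$ is a one-line check (both sides equal $F(hf)(x)$ at index $f$), and lax naturality of $\epsilon_F$ is exactly the lax-compatibility condition defining $\tilde{F}(a)$. I then define $\psi^\alpha$ as the pointwise composite $\epsilon_{F_2}\alpha\iota_{F_1}$; explicitly,
\[
\psi^\alpha_a(x)\;=\;\alpha_a(\iota_a(x))_{\mathrm{id}_a}.
\]
That $\psi^\alpha$ is lax natural will be verified by chaining naturality of $\iota$, naturality of $\alpha$, and the lax-compatibility of $\alpha_a(\iota_a(x))\in\tilde{F_2}(a)$: for $h:a'\to a$ one obtains $F_2(h)\psi^\alpha_a(x)\sqsubseteq\alpha_a(\iota_a(x))_h=\psi^\alpha_{a'}(F_1(h)(x))$.

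The inclusion aspect and part (ii) fall out immediately. The retraction identity $\psi^{\tilde\phi}=\phi$ is a direct computation,
\[
\psi^{\tilde\phi}_a(x)\;=\;\tilde{\phi}_a(\iota_a(x))_{\mathrm{id}_a}\;=\;\phi_a(\iota_a(x)_{\mathrm{id}_a})\;=\;\phi_a(x),
\]
so $\tilde{(\_)}$ is a section of $\psi^{(\_)}$, which combined with the obvious pointwise order-preservation of $\tilde{(\_)}$ gives an order embedding. For (ii), $\iota_a(x)$ does not depend on $\alpha$ and the order on $Nat[\tilde{F_1},\tilde{F_2}]$ is pointwise, so $\alpha\sqsubseteq\alpha'$ forces $\psi^\alpha\sqsubseteq\psi^{\alpha'}$. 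For (i), $\psi^{Id_{\tilde{F}}}_a(x)=\iota_a(x)_{\mathrm{id}_a}=x$ gives the identity case, and the composition inequality $\psi^\beta\psi^\alpha\sqsubseteq\psi^{\beta\alpha}$ reduces, after expanding both sides, to the pointwise inequality $\iota_a(\psi^\alpha_a(x))\sqsubseteq\alpha_a(\iota_a(x))$ in $\tilde{F_2}(a)$, which is a special case of the key step below.

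The remaining step is the adjunction inequality $\tilde{\psi^\alpha}\sqsubseteq\alpha$, which I expect to be the only real content of the lemma. Fix $y\in\tilde{F_1}(a)$ and $f:b\to a$; we need $\psi^\alpha_b(y_f)\sqsubseteq\alpha_a(y)_f$. The key observation is the pointwise comparison
\[
\iota_b(y_f)\;\sqsubseteq\;\tilde{F_1}(f)(y)\quad\text{in }\tilde{F_1}(b),
\]
which unwinds, at $g:c\to b$, to $F_1(g)(y_f)\sqsubseteq y_{fg}$ --- exactly the lax-compatibility of $y\in\tilde{F_1}(a)$. Order-enrichment of $\alpha_b$ and naturality of $\alpha$ then yield
\[
\alpha_b(\iota_b(y_f))_{\mathrm{id}_b}\;\sqsubseteq\;\alpha_b(\tilde{F_1}(f)(y))_{\mathrm{id}_b}\;=\;(\tilde{F_2}(f)\alpha_a(y))_{\mathrm{id}_b}\;=\;\alpha_a(y)_f.
\]
With this in hand, the adjunction $\tilde\phi\sqsubseteq\alpha\Leftrightarrow\phi\sqsubseteq\psi^\alpha$ is formal: $(\Leftarrow)$ apply the order-preserving $\tilde{(\_)}$ to get $\tilde\phi\sqsubseteq\tilde{\psi^\alpha}\sqsubseteq\alpha$, while $(\Rightarrow)$ uses order-preservation of $\psi^{(\_)}$ together with the retraction identity to get $\phi=\psi^{\tilde\phi}\sqsubseteq\psi^\alpha$. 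The main obstacle is really just this one inequality; everything else is a routine unfolding of the definitions of $\iota$, $\epsilon$, and lax naturality.
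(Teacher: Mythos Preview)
Your argument for the adjunction is essentially identical to the paper's: the paper defines the same two comparison maps (calling your $\iota$ and $\epsilon$ respectively $\epsilon$ and $\mu$), sets $\psi^\alpha=\mu^{F_2}\circ\alpha\circ\epsilon^{F_1}$, verifies the retraction $\psi^{\tilde\phi}=\phi$ by the same one-line computation, and proves $\tilde{\psi^\alpha}\sqsubseteq\alpha$ via exactly the inequality $F_1(g)(x_f)\sqsubseteq x_{fg}$ followed by naturality of $\alpha$. Your treatment of (i) is likewise the same observation the paper records as $\epsilon^F_a\mu^F_a\sqsubseteq \mathrm{Id}$.

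The one genuine discrepancy is your reading of (ii). You interpret ``$\psi^\alpha$ is lax-natural in $\alpha$'' as monotonicity of $\alpha\mapsto\psi^\alpha$, and what you prove for that is correct. But the paper means something stronger: lax-naturality of the family $\psi^{(\_)}$ with respect to the presheaf variables. Explicitly (as the paper spells out), given lax natural transformations $\phi_i:F_i\to G_i$ and natural $\alpha,\beta$ with $\beta\,\tilde{\phi_1}=\tilde{\phi_2}\,\alpha$, one has $\psi^\beta\phi_1\sqsubseteq\phi_2\psi^\alpha$. This is the statement actually invoked in the proof of the main theorem (for naturality in $A$), and it follows because your $\iota$ is natural in $F$ while your $\epsilon$ is only lax natural in $F$. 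Your monotonicity claim, while true, is not what (ii) asserts and does not suffice for that later application.
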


\begin{proof}

  We first observe that if $F : \mathcal{C}^{op} \to \mathfrak{K}$ is any presheaf, there are two maps $\mu : \tilde{F} \to F$ and $\epsilon : F \to \tilde{F}$, defined as follows:
  \[
   \epsilon^F_a : \begin{array}{rcl}
      F(a) & \to & \tilde{F}(a) \\
       x & \mapsto & (F(f)x)_{f:b \to a}            
    \end{array} \qquad   \mu^F_a:  \begin{array}{rcl}
      \tilde{F}(a) & \to & F(a)\\
       (y_f)_{f:b \to a} & \mapsto & y_{Id_a}            
    \end{array}
  \]

  One easily checks that $\epsilon$ is a natural transformation (and takes values in $\tilde{F}(a)$) and $\mu$ is a lax natural transformation. Now, given a natural transformation $\alpha:\tilde{F_1} \to \tilde{F_2}$, we define $\psi^{\alpha}:F_1 \xrightarrow{\sqsubseteq} F_2$ as the composite $\psi^\alpha : \mu^{F_2} \circ \alpha \circ \epsilon^{F_1}$; explicitly, 
  \[ \psi^{\alpha}_a(x) = (\alpha_a((F_1(f)x)_{f:b \to a}))_{Id_a} \]

We then observe that
\begin{eqnarray*}
\psi^{\tilde{\phi}}_a(x) & = & ( \tilde{\phi} ( ( F(f)(x))_{f: b \to a}))_{Id_a} \\
 & = & ( (\phi_b F(f)(x))_{f: b \to a}))_{Id_a} \\
 & = & \phi(x)
\end{eqnarray*}
And finally for any natural transformation $\alpha :  \tilde{F_1} \to  \tilde{F_2}$, $\tilde{\psi^{\alpha}}(x_f) = (\psi^{\alpha}_b (x_f))_{f: b \to a}$. But for each $f: b \to a$,
\begin{eqnarray*}
\psi^{\alpha}_b (x_f) & = & ( \alpha_b (( F_1( g)(x_f))_{g: c \to b} )_{Id_b} \\
  & \sqsubseteq & ( \alpha_b ( (x_{fg})_{g: c \to b}) )_{Id_b} \\
 & = &  (\alpha_a( (x_f)_{f:b \to a}) )_f
\end{eqnarray*}
where the last line is by naturality of $\alpha$ at $f$ and the second last line uses that $F_1(g) (x_f) \sqsubseteq x_{fg}$. Hence $\tilde{\psi^{\alpha}} \sqsubseteq \alpha$. Together this shows that $ \tilde{( \_ ) } \dashv \psi^{( \_ )} $ and that $\tilde{( \_ ) } : Nat^{\sqsubseteq}[F_1,F_2] \to Nat[\tilde{F_1}, \tilde{F_2}]$ is injective.

For the `further' part (i), the preservation of identities is immediate from construction, and the inequality is clear as $\epsilon_a^F \mu_a^F \sqsubseteq Id$ by definition of $\tilde{F}(a)$. Part (ii) follows as $\mu$ is natural in $F$ and $\epsilon$ lax natural (explicitly we are asserting that if $\phi_i: F_i \to G_i$, $i = 1,2$ are two lax natural transformations such that $\beta \tilde{\phi_1} = \tilde{\phi_2} \alpha $, then $\psi^{\beta} \phi_1 \sqsubseteq  \phi_2 \psi^{\alpha} $).  
\end{proof}

\begin{remark}
 What is happening here is that $\epsilon$ and $\mu$ defined above are the unit and co-unit of a  KZ-adjunction between $\tilde{(\_)} : [\mathcal{C}^{op},\mathfrak{K}]^{\sqsubseteq} \to [\mathcal{C}^{op},\mathfrak{K}] $ and the forgetful functor $[\mathcal{C}^{op},\mathfrak{K}] \to [\mathcal{C}^{op},\mathfrak{K}]^{\sqsubseteq}$.  The notion of KZ-adjunction is introduced in section 4.1 of \cite{singular} and is sometimes called a \emph{lax-idempotent} adjunction.
\end{remark}

\section{Examples of our `lax to ordinary' functor $\tilde{ (\_)}$ in action}
\label{sec:exemple}

Given the forgoing we must now provide some `real life' examples of $\tilde{(\_)}$ in action. The last one below corresponds to the relative version of the construction $C$ introduced in Proposition \ref{prop:C_completion}.

\begin{example}
Recall the powerset construction, $P:\mathbf{Set} \to \mathbf{Pos}$, which sends a set to its powerset and $f:A \to B$ to $\exists_f: PA \to PB$. This can be done in any topos; given an object $A$ of $[\mathcal{C}^{op}, \mathbf{Set}]$, i.e. a presheaf, consider its powerset $P_{\hat{\mathcal{C}}}A: \mathcal{C}^{op} \to \mathbf{Pos}$. In this example we show that $P_{\hat{\mathcal{C}}}A \cong \widetilde{P \circ A }$. 

We will first assume that $\mathcal{C}$ has a terminal object $1$ and show that $P_{\hat{\mathcal{C}}}A$ evaluated at $1$ is isomorphic to $\widetilde{P \circ A }$ evaluated at $1$. The presheaf $P_{\hat{\mathcal{C}}}A$ evaluated at $1$ is $Sub_{\hat{\mathcal{C}}}(A)$, i.e. the collection of monomorphisms from $I$ to $A$. But a subobject $I \subseteq A$ is a collection of subobjects $I(a) \subseteq A(a)$ such that for any morphism $f : b \to a$ of $\mathcal{C}$, the image of $I(a)$ under $A(f)$ factors through $I(b)$. This is just another way of stating that the morphism $I \subseteq A$ is a natural transformation. As the image of $I(a)$ under $P(f)$ factors through $I(b)$ iff $\exists_{A(f)}(I(a)) \subseteq I(b)$, it follows that $[P_{\hat{\mathcal{C}}}A](1)$ is isomorphic to
\begin{eqnarray*}
\{ (I_a) \in \prod_{a \in Ob(\mathcal{C})} P(A(a)) | \exists_{A(f)}I_a \subseteq I_b \text{ } \forall f: b \to a \in \mathcal{C} \} \text{.} 
\end{eqnarray*}

Given a general object $a$ of $\mathcal{C}$, recall that $[ \mathcal{C}^{op}, \mathbf{Set} ] / \mathcal{C}( \_ , a) \simeq [ (\mathcal{C}/a)^{op} , \mathbf{Set} ]$ so there is a geometric morphism $ \gamma_a: \hat{\mathcal{C} / a}  \to  $$ \hat{\mathcal{C}}$ whose inverse image has a left adjoint (it is a slice, A4.1.3 \cite{Elephant}). The left adjoint sends $1$ to $\mathcal{C}( \_ , a)$ and the inverse image is precomposition with the forgetful functor $\Sigma^{op}_a:(\mathcal{C}/ a) ^{op} \to \mathcal{C}^{op}$ (A4.1.4).  
Now $P_{\hat{\mathcal{C}}}A(a)$ is naturally isomorphic to $Nat[\mathcal{C} ( \_ , a) , P_{\hat{\mathcal{C}}}A] $ which is, via this adjunction, isomorphic to 
\begin{eqnarray*}
Nat[1 , \gamma_a^*P_{\hat{\mathcal{C}}}A] & \cong & Nat[1 , P_{\hat{\mathcal{C}/a}}(A \circ \Sigma_a)]
\end{eqnarray*}

where the isomorphism follows as $\gamma_a$ is logical (all geometric morphisms that are slices are logical; e.g. A2.3.2 \cite{Elephant}) and so its inverse image commutes with the powerset. As $\mathcal{C}/a$ has a terminal object we can apply the above reasoning to conclude that $P_{\hat{\mathcal{C}}}A(a)$ is isomorphic to  
\begin{eqnarray*}
\{ (I_f) \in \prod_{f:b \to a}  P(A(b)) | \exists_{A(g)}I_f \subseteq I_{fg} \text{ } \forall c \xrightarrow{g} b \xrightarrow{f} a \}
\end{eqnarray*}

which is the lax limit of the diagram $(\mathcal{C}/a)^{op} \xrightarrow{\Sigma_a^{op}} \mathcal{C}^{op} \xrightarrow{P \circ A} \mathbf{Pos}$. This establishes order isomorphisms $P_{\hat{\mathcal{C}}}A(a) \cong  \widetilde{P \circ A}(a)$ for every object $a$ of $\mathcal{C}$.

We must also check that these order isomorphisms are natural for any morphism $h: a' \to a$ of $\mathcal{C}$. This essentially follows as $\gamma_{a'} \cong \gamma_h \gamma_a$. The effect of $\gamma_h^*$ on subobjects $ I \subseteq A \circ \Sigma_a^{op}$ is precomposition with the `postcompose with $h$' functor $\Sigma_h^{op}: (\mathcal{C} / a')^{op} \to (\mathcal{C} / a)^{op}$. So $(I_f)_{f:b \to a}$ is mapped to $(I_{hf'})_{f':b' \to a'}$ by $\gamma^*_h$ which is the formula we have given for $\widetilde{P \circ A}(h)$. 

Finally we need to check naturality with respect to a natural transformation $\alpha: A \to B $ (i.e. with respect to a morphism of $\hat{\mathcal{C}}$). That is we must check that 
\[ \begin{tikzcd}
  P_{\hat{\mathcal{C}}} A \ar[r,"\cong"] \ar[d,"\exists_\alpha"] & \widetilde{P \circ A} \ar[d,"\widetilde{P \alpha}"] \\
  P_{\hat{\mathcal{C}}} B \ar[r,"\cong"swap] & \widetilde{P \circ B}    
  \end{tikzcd} \]
commutes. As above this can be seen by first checking the case of the diagram evaluated at $a = 1$ and then applying change of base. The case $a=1$ follows as for any subobject $I \subseteq A$ and for any object $b$ of $\mathcal{C}$, $(\exists_{\alpha}(I))(b) = \exists_{\alpha_b}(I(b))$. (For this last well know fact recall that every object $b$ gives rise to a geometric morphism $p_b:\mathbf{Set} \to \hat{\mathcal{C}}$ whose inverse image is `evaluate at $b$'; existential quantification $\exists$ commutes with inverse images.)
\end{example}

This example is a standard application of basic topos theory, but it was worth writing out the reasoning in full as it generalises:
\begin{example} \label{ex:ideal_completion}
Consider the ideal completion of a poset $idl: \mathbf{Pos} \to \mathbf{Pos}$. Then for any poset in $\hat{\mathcal{C}}$, $idl_{\hat{\mathcal{C}}}P \cong \widetilde{idl \circ P }$. 

Recall that a subset $I \subseteq P$ is an ideal if and only if various geometric sequents are satisfied ($a \leq b \in I \Rightarrow a \in I, \exists * \in I, a,b \in I \Rightarrow \exists c \in I \wedge a,b \leq c$). Because these are geometric, they are preserved by any inverse image functor. That is, if $I \subseteq P$ is an ideal in a topos, then for any geometric morphism $f$, $f^* I \subset f^* P$ is again an idea. In particular, if $I \subseteq P$ is an ideal in the topos $\widehat{\mathcal{C}}$, then given that evaluation at $a \in \mathcal{C}$ is an inverse image functor, $I(a) \subseteq P(a)$ is an ideal in $\mathbf{Set}$ for each $a \in \mathcal{C}$. In fact, $I \subseteq P$ is an ideal if and only if $I(a) \subseteq P(a)$ is an ideal for each object $a$ of $\mathcal{C}$.

As in the last example $I \subseteq P$ iff for any morphism $f : b \to a$ of $\mathcal{C}$, the image of $I(a)$ under $P(f)$ factors through $I(b)$. But for ideals the image of $I(a)$ under $P(f)$ factors through $I(b)$ iff $idl(P(f))(I(a)) \subseteq I(b)$ (recall for any monotone map $f:P_1 \to P_2$ that $idl(f): idl(P_1) \to idl(P_2)$ sends an ideal $I$ of $P_1$ to $\downarrow \{ f(i) | i \in I \}$). It follows that $[idl_{\hat{\mathcal{C}}}P](1)$ is isomorphic to
\begin{eqnarray*}
\{ (I_a) \in \prod_{a \in Ob(\mathcal{C})} idl(P(a)) | idl(P(f))I_a \subseteq I_b \text{ } \forall f: b \to a \in \mathcal{C} \} 
\end{eqnarray*}

from which we establish $idl_{\hat{\mathcal{C}}}P \cong \widetilde{idl \circ P }$, naturally in $P$, just as in the previous example. 
\end{example}

This technique can be applied for any construction that is defined via sets of subsets determined by geometric sequents (provided that the images of the subsets under a presheaf evaluated at a morphism $f:b\to a$ factor iff the constructed morphism (e.g. the $idl(P(f))$ in the last example) evaluated at the domain subset is contain in the codomain subset). In particular:

\begin{example} 
Consider the completion operation $C: \mathbf{NDL} \to \mathbf{NDL}$ introduced in Proposition \ref{prop:C_completion}. Then for any normal distributive lattice $N$ in $\hat{\mathcal{C}}$, we have $C_{\hat{\mathcal{C}}} N \cong \widetilde{ C \circ N }$. 

Indeed, for $N \in \hat{\mathcal{C}}$  an element of $C(N)$ is a subobject of $I \subseteq N$ which satisfies certain geometric axioms, for example ``$i\in I \text{ and } j \leqslant i \Rightarrow j \in I$'' or ``$a \in I \Rightarrow \exists b \in I, a \triangleleft b$'' which can be rewritten as ``$a \in I \Rightarrow \exists b \in I, c \in N, a \wedge c = 0 \text{ and } c \vee b = 1 $''.

Exactly as in Example \ref{ex:ideal_completion}, a subobject of $N$ is a collection of subsets $I(a) \subseteq N(a)$ for each $a \in \mathcal{C}$, such that for $f : b \to a$ in $\mathcal{C}$, the induced map $N(a) \to N(b)$ sends $I(a)$ to $I(b)$. This occurs if and only if $\downarrow \exists_f (I(a)) \subseteq I(b)$. Further, each geometric axiom is satisfied in $\mathcal{C}$ exactly when when it is satsified by $I(a)$ in $N(a)$, for every $a \in \mathcal{C}$. This provides the identification $C_{\hat{\mathcal{C}}} N \cong \widetilde{ C \circ N }$, which is the explicit description needed for the main result. 

\end{example}

\section{Proof of the main theorem}
\label{sec:main_th}

We are now ready to prove the main theorem of the paper. 
\begin{theorem}
For any small category $\mathcal{C}$ there is an equivalence of categories
\begin{eqnarray*}
\mathbf{KRegFrm}_{\hat{\mathcal{C}}} \simeq [\mathcal{C}^{op},\mathbf{KRegFrm}  ] \text{.}
\end{eqnarray*}
\end{theorem}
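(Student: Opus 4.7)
The plan is to combine the three preceding sections. First I would internalise the splitting of Section~\ref{sec:NDL}: the adjunction $c \dashv u$ with counit $cu \cong \mathrm{Id}$ and unit $\Downarrow$ is purely geometric, so inside $\hat{\mathcal{C}}$ it gives a fully faithful reflective inclusion $u_{\hat{\mathcal{C}}}: \mathbf{KRegFrm}_{\hat{\mathcal{C}}} \hookrightarrow \mathbf{NDL}_{\hat{\mathcal{C}}}$, whose essential image is the full subcategory on those $N$ with $\Downarrow: N \to C_{\hat{\mathcal{C}}}(N)$ an isomorphism. Transporting along the geometric equivalence $\mathbf{NDL}_{\hat{\mathcal{C}}} \simeq [\mathcal{C}^{op},\mathbf{NDL}]$ and using the final example of Section~\ref{sec:exemple} to identify $C_{\hat{\mathcal{C}}}$ with $\widetilde{C \circ (\_)}$ presents $\mathbf{KRegFrm}_{\hat{\mathcal{C}}}$ as the full subcategory $\mathcal{S} \subseteq [\mathcal{C}^{op},\mathbf{NDL}]$ of fixed points $F \cong \widetilde{C \circ F}$. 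It therefore suffices to produce an equivalence $\mathcal{S} \simeq [\mathcal{C}^{op},\mathbf{KRegFrm}]$.

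For this I would define $\Phi: [\mathcal{C}^{op},\mathbf{KRegFrm}] \to \mathcal{S}$ by $\Phi(G) := \widetilde{uG}$ and $\Psi: \mathcal{S} \to [\mathcal{C}^{op},\mathbf{KRegFrm}]$ by $(\Psi F)(a) := c(F(a))$. Since each $G(a)$ is already compact regular, $C \circ uG \cong uG$, so $\widetilde{C \circ \widetilde{uG}} \cong \widetilde{uG}$ by internal idempotence of $C_{\hat{\mathcal{C}}}$, which places $\Phi(G)$ in $\mathcal{S}$ (equivalently, $\widetilde{uG}$ is the underlying NDL of $c_{\hat{\mathcal{C}}}(uG)$). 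The composite $\Phi\Psi$ is easy: $\Phi\Psi(F) = \widetilde{C \circ F} \cong F$ is the defining isomorphism of $\mathcal{S}$, natural in $F$ as the inverse of the reflective unit restricted to $\mathcal{S}$.

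The hard part will be showing $\Psi\Phi \cong \mathrm{Id}$, i.e.\ establishing a natural isomorphism $c(\widetilde{uG}(a)) \cong G(a)$. The candidate is $c(\mu_a)$ with inverse $c(\epsilon_a)$, built from the projection/diagonal pair of Section~\ref{sec:lax}; one composite is trivial from $\mu\epsilon = \mathrm{id}$. The other composite and the strict naturality in $a$ both boil down to arguments of the same shape: given a $C$-ideal $I \subseteq \widetilde{uG}(a)$ and some $(x_f) \in I$, the $C$-ideal property supplies $(y_f) \in I$ with $(x_f) \triangleleft (y_f)$, witnessed by a lax-compatible $(w_f) \in \widetilde{uG}(a)$. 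For any $h:a' \to a$, applying $G(h)$ to the $\mathrm{id}_a$-component of this witness and invoking the lax constraint $G(h)(w_{\mathrm{id}_a}) \leq w_h$ produces $x_h \wedge G(h)(w_{\mathrm{id}_a}) = 0$ and $G(h)(y_{\mathrm{id}_a}) \vee G(h)(w_{\mathrm{id}_a}) = 1$ in the compact regular frame $G(a')$, hence $x_h \triangleleft G(h)(y_{\mathrm{id}_a})$ and in particular $x_h \leq G(h)(y_{\mathrm{id}_a})$. This yields the strict naturality identity
\[
\bigvee\{x_h : (x_f) \in I\} = \bigvee\{G(h)(x_{\mathrm{id}_a}) : (x_f) \in I\};
\]
a second application of the $C$-ideal property to $(y_f)$ upgrades $y_{\mathrm{id}_a} \leq \bigvee \pi_{\mathrm{id}_a}(I)$ to $y_{\mathrm{id}_a} \triangleleft \bigvee \pi_{\mathrm{id}_a}(I)$, which is the missing ingredient to identify $I$ with $c(\epsilon_a)c(\mu_a)(I)$. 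Regularity of the target frames is precisely what allows the lax-limit inequality to be sharpened to a strict equality after passing through $c$. With both composites natural isomorphisms, $\Phi$ and $\Psi$ form the desired equivalence.
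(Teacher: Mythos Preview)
Your opening move contains a genuine error: there is no adjunction $c \dashv u$, and $u:\mathbf{KRegFrm}\to\mathbf{NDL}$ is \emph{not} fully faithful. A non-principal ultrafilter on $\mathbb{N}$ gives a lattice homomorphism $\mathcal{O}(\mathbb{N}^{+})\to 2$ (with $\mathbb{N}^{+}$ the one-point compactification) which is not a frame homomorphism, so even for $\mathcal{C}=\ast$ the inclusion $\mathbf{KRegFrm}\hookrightarrow\mathbf{NDL}$ is strictly non-full. The paper is explicit about this: it identifies $\mathbf{KRegFrm}_{\hat{\mathcal{C}}}$ with a \emph{non-full} subcategory of $[\mathcal{C}^{op},\mathbf{NDL}]$. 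Concretely, your ``defining isomorphism'' $F\cong\widetilde{C\circ F}$ is not natural for arbitrary $\mathcal{S}$-morphisms: naturality at $\alpha:F\to F'$ unwinds (already when $\mathcal{C}=\ast$) to $\alpha(\bigvee I)=\bigvee\alpha(I)$ for every $I\in C(F)$, which is exactly the preservation of directed joins that a bare lattice homomorphism need not satisfy. So the step you flagged as ``easy'' in fact fails on morphisms with your definition of $\mathcal{S}$, and the equivalence you are building is with the wrong category.

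If you repair this by taking $\mathcal{S}$ to be the non-full subcategory on internal frame homomorphisms, then additional checks appear (e.g.\ that your $\Phi(\beta)=\widetilde{u\beta}$ is an internal frame homomorphism), and your ``easy'' direction becomes genuinely easy again. For the hard direction $C\circ\widetilde{A}\cong A$ your explicit ideal-chasing argument is plausible and is a different route from the paper's: rather than computing with witnesses of $\triangleleft$, the paper applies Lemma~\ref{tilde} to the isomorphism $\alpha:\widetilde{C\circ\widetilde{A}}\to\widetilde{A}$ to obtain a lax natural transformation $\psi^{\alpha}:C\circ\widetilde{A}\to A$, observes that each component is a suplattice homomorphism (via Example~\ref{sup_example}) hence a frame homomorphism, and then invokes the single global fact that the ordering on frame homomorphisms between compact regular frames is discrete (III~1.5 of \cite{Stone}) to upgrade the lax inequalities $\psi^{\beta}\psi^{\alpha}\sqsubseteq\psi^{\beta\alpha}$ and the lax naturality of $\psi$ to strict equalities in one stroke. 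Your computation is essentially a by-hand proof of the relevant special cases of that discreteness; the paper's approach isolates it as the conceptual reason the argument works.
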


\begin{proof}
  Firstly, recall that $\mathbf{NDL}_{\hat{\mathcal{C}}}$ is isomorphic to $[\mathcal{C}^{op},\mathbf{NDL}]$; this is implicit in the exposition above as it is the category of models of a geometric theory, or consult D.1.2.14 of \cite{Elephant}. We can therefore treat compact regular frames in $\hat{\mathcal{C}}$, firstly as normal distributive lattices via the forgetful functor and then as functors $\mathcal{C}^{op} \to \mathbf{NDL}$. That is, in what follows we identify $\mathbf{KRegFrm}_{\hat{\mathcal{C}}}$ with the non-full subcategory of $[\mathcal{C}^{op},\mathbf{NDL}]$ which is fixed by the idempotent endofunctor $C_{\widehat{\mathcal{C}}} = \widetilde{ C \circ \_ }$. We define the two functors:
  \[
    \Phi: \begin{array}{ccc} \mathbf{KRegFrm}_{\hat{\mathcal{C}}} &\to& [\mathcal{C}^{op},\mathbf{KRegFrm}] \\
      L & \mapsto & C \circ L
    \end{array}\]\[
    \Psi: \begin{array}{ccc} [\mathcal{C}^{op},\mathbf{KRegFrm}] &\to& \mathbf{KRegFrm}_{\hat{\mathcal{C}}} \\
      A & \mapsto & \widetilde{A}
          \end{array}  \]        
These are well defined: given $L \in \mathbf{KRegFrm}_{\hat{\mathcal{C}}}$, and more generally any $ L\in [\mathcal{C}^{op},\mathbf{NDL}]$, the composite $C \circ L$ determines a functor $\mathcal{C}^{op}  \to \mathbf{KRegFrm}$; and this is clearly natural in $L$. For any $A \in [\mathcal{C}^{op},\mathbf{KRegFrm}]$, we have $A \cong C \circ A$ (functorially), and hence $\widetilde{A} \simeq \widetilde{C \circ A} \simeq C_{\hat{\mathcal{C}}} (A)$, hence $\widetilde{A}$ being of the form $C_{\hat{\mathcal{C}}} ( \_)$ it is a compact regular frame in $\widehat{\mathcal{C}}$ by an internal application of Proposition \ref{prop:C_compact}.

Certainly $ L \cong C_{\hat{\mathcal{C}}} L \cong \widetilde{ C \circ L }$, from which $\Psi \Phi \cong Id_{\mathbf{KRegFrm}_{\hat{\mathcal{C}}}}$.  

So we have to but check $\Phi \Psi \cong Id_{[\mathcal{C}^{op},\mathbf{KRegFrm}  ]}$; that is, that $C \circ \widetilde{ A} \cong A$ naturally for each $A:\mathcal{C}^{op} \to \mathbf{KRegFrm}$. Since $\widetilde{ A}$ is a compact regular frame in $\widehat{\mathcal{C}}$ we know that there is an isomorphism $\alpha: \widetilde{ C \circ \widetilde{A} } \to \widetilde{A}$; this gives rise to a lax natural transformation $\psi^{\alpha}: C \circ \widetilde{A} \to A$, using the notation of Lemma \ref{tilde}. But then note that for any object $a$ of $\mathcal{C}$,  $\psi_a^{\alpha}: [C \circ \widetilde{A} ](a) \to A(a)$ is a suplattice homomorphism as well as a lattice homomorphism; i.e. it is a frame homomorphism. (Recall Example \ref{sup_example}; so the $\epsilon$ and $\mu$ used to construct $\psi^{\alpha}$ are both also suplattice homomorphisms and certainly $\alpha_a$ is a suplattice homomorphism as it is an order-isomorphism.) But then $\psi_a^{\alpha}$ is a frame homorphism between compact regular frames, so we can use $\psi^{Id_{\tilde{F}}} = Id_F$ and $\psi^{ \beta} \psi^{ \alpha } \sqsubseteq \psi^{ \beta \alpha}$ established in Lemma \ref{tilde} to see that  $\psi_a^{\alpha}$ is an isomorphism; this is because the partial ordering of frame homomorphisms between compact regular frames is discrete (e.g. III Lemma 1.5 of \cite{Stone}). Next, $\psi^{\alpha}$ is constructed as a lax natural transformation, but in fact it can be seen to be a natural transformation as the homsets of $\mathbf{KRegFrm}$ are discrete. Finally, for naturality in $A$, recall part (ii) of Lemma \ref{tilde}; again we only have lax-naturality from the Lemma, but this is sufficient as the morphisms involved are all frame homomorphisms between compact regular frames.  
 
(Note in the above that relative to both our base topos $\mathbf{Set}$ and $\hat{\mathcal{C}}$ we are passing through a forgetful functor back to $\mathbf{NDL}$ without notation; however, these forgetful functors create isomorphisms.) 
\end{proof}

\begin{remark}
It should be noted that Theorem 25 of \cite{gelfand} also in effect provides this description of compact regular frames in a presheaf topos 
\end{remark}

\begin{remark} It should be noted that, Joyal and Tierney gave in \cite{extension} a general description of internal frames in the topos $\widehat{\mathcal{C}}$, at least in the case where $\mathcal{C}$ has finite limits, which in theory could be specialized to a description of compact regular frame in $\widehat{\mathcal{C}}$. However, this description works differently from ours, and would not recover our result, but we can explain how they relates.

    Each object of $a \in \mathcal{C}$ induces a geometric morphisms $a: \text{Set} \to \widehat{\mathcal{C}}$ defined by $a^* F = F(a)$. Now given a frame (or locale) $L$ in $\widehat{\mathcal{C}}$ there are two different ways to use this point to get a frame in Set: In general the image of frame by an inverse image functor will not be a frame, but in this specific case we do obtain a frame $a^* L = L(a)$. There is however a way to ``pullback'' a frame along a geometric morphism, by taking a site of definition for the frame and pullback the site using $a^*$, the resulting construction on frames is sometime denoted $a^\# L$, it also corresponds to the pullback of toposes
   \[ \begin{tikzcd}[ampersand replacement = \&]
      \text{Sh}(a^\# L) \ar[r] \ar[d] \ar[dr,phantom,"\lrcorner"very near start] \& \text{Sh}_{\widehat{\mathcal{C}}}(L) \ar[d] \\
      \text{Set} \ar[r,"a"] \& \widehat{\mathcal{C}}
    \end{tikzcd}\]

  Now, in general these $a^\# L$ are not functorial in the point $a$ (at least not for the usual notion of morphisms of frame or locale). However, in the special case where $L$ is a compact Hausdorff frame it is possible to show that $a^\# L = C(a^* L)$, which proves that they are covariant in $a \in \mathcal{C}$ when seen as values in frame.

  \bigskip

  Now, the key point is that the Joyal-Tierney description of frames in $\widehat{\mathcal{C}}$ is in terms of the functor $L(a) = a^* L$, while ours is in terms of the functors $L_a = a^\#  L = C(a^* L)$. In particular, they are connected by the formulas:
  \[ L_a = C(L(a)) \qquad L(a) = \widetilde{L_a} \]
 Here the second identity, follows from the observation above that for $L$ an internal compact regular frame seen as an object in $\mathbf{NDL}_{\widehat{\mathcal{C}}} = [\mathcal{C}^{op},\mathbf{NDL}]$ satisfies $L = C_{\widehat{\mathcal{C}}} L = \widetilde{C \circ L}$, so as $L_a = C(L(a))$ one obtains that $L(a) = \widetilde{C(L(a))} = \widetilde{L_a}$. The ``$L(a)$'' construction also corresponds with the identification of $\mathbf{KRegFrm}_{\hat{\mathcal{C}}}$ as a non-full subcategory of $[\mathcal{C},\mathbf{NDL}]$ that we used during the proof.

  We do not expect that such a description in terms of the $L_a$ as we provided here for compact regular frame can be extended to general frame.

\end{remark}

\end{document}